\newtheorem{thm}{Theorem}
\newtheorem{prop}[thm]{Proposition}
\newtheorem{cor}[thm]{Corollary}
\newtheorem{lem}[thm]{Lemma}
\newtheorem{rmk}[thm]{Remark}
\newtheorem{qst}[thm]{Question}
\newsavebox{\qedB}
\sbox{\qedB}{\setlength{\unitlength}{1mm}
 \begin{picture}(4,4)(0,0)
  \thinlines
  {\put(0,0){\framebox(2.83,2.83){}}}%
  {\put(1.17,1.17){\framebox(2.83,2.83){}}}%
  {\put(0,0){\framebox(4,4){}}}%
  {\put(1.17,1.17){{\rule{1ex}{1ex} }}}%
 \end{picture}}
\newcommand{\QEDB}{\ifmmode\def\next{\tag"\usebox{\qedB}"}%
 \else\let\next=\relax
 {\unskip\nobreak\hfil\penalty50
 \hskip2em\hbox{}\nobreak\hfil\usebox{\qedB}
 \parfillskip=0pt \finalhyphendemerits=0\penalty-100\bigskip}\fi\next}
\newcommand{\bprop}{\begin{prop}}
\newcommand{\eprop}{\end{prop}}
\newcommand{\bcor}{\begin{cor}}
\newcommand{\ecor}{\end{cor}}
\newcommand{\blem}{\begin{lem}}
\newcommand{\elem}{\end{lem}}
\title{Letter frequency vs factor frequency in pure morphic words}
\authorrunning{}
\author{Shuo Li}
\institute{\institute{Department of Mathematics \& Statistics, The University of Winnipeg,\\ Winnipeg, Canada.
}
}
\begin{document}

\maketitle

%%%%%%%%%%%%%%%%%%%%%%%%%%%%%%%%%%%%%%%%%%%%%%%%%%%%
% Sections
%%%%%%%%%%%%%%%%%%%%%%%%%%%%%%%%%%%%%%%%%%%%%%%%%%%%

\begin{abstract}

We prove that, for any pure morphic word $w$, if the frequencies of all letters in $w$ exist, then the frequencies of all factors in $w$ exist as well. This result answers a question of Saari in his doctoral thesis.
\end{abstract}

\section{Introduction}
Let $A$ be a finite alphabet and let $w=w[1]w[2]w[3]\cdots$ be an infinite word over $A$. Let $v$ be a factor of $w$ of length $k$, the {\em (ordinary) frequency} of $v$ in $w$ is defined as $$\lim_{n \to \infty}\frac{\#\{i|\; w[i]w[i+1]\cdots w[i+k-1]=v, 1 \leq i \leq n \}}{n},$$
if the limit exists. For the factors such that $k=1$, the previous frequency is called the {\em (ordinary) frequency of letters}. The frequency of letters in morphic words has been studied since the early 1970’s. It was proved by Cobham in 1972 \cite{Cobham} that the frequency of a letter in an automatic word, if it exists, is rational. Michel \cite{michel,Michel1} proved that the frequencies of all letters exist in primitive morphic words. Peter \cite{PETER03} gave a sufficient and necessary condition for the existence of the frequency of a letter in an automatic word. Saari proved in \cite{Saari1} that the frequencies of both letters exist in any pure morphic binary word and gave a sufficient and necessary condition for the existence of the frequency of a letter in a morphic sequence over an arbitrary alphabet \cite{Saari2}. For any infinite word $w$ and any factor $v$ of length $k$ of $w$, one can also define the {\em logarithmic frequency} of $v$ in $w$ as 
$$\lim_{n \to \infty}\frac{1}{\log(n)}\sum_{\{i| \; w[i]w[i+1]\cdots w[i+k-1]=v, 1 \leq i \leq n \}}\frac{1}{i},$$
if this limit exists. It is proved in \cite[Proposition 8.4.4]{allouche_shallit_2003} that if the frequency of a letter $a$ in $w$ exists, then the logarithmic frequency of $a$ in $w$ exists as well, and these two frequencies are equal. Allouche and Shallit asked in \cite[Section 8.8]{allouche_shallit_2003} if the logarithmic frequency of any letter in a morphic word must exist and Bell gave a positive answer to this question in \cite{Bell08} and generalized it to all factors. Concerning the relation between the frequency of letters and the frequency of factors, Saari proved in his doctoral thesis \cite[Proposition 3.1]{Saari3} that, for an arbitrary morphic word $w$, the existence of the frequencies of all letters in $w$ cannot imply the existence of the frequencies of all factors in $w$ and asked if it is true for pure morphic words \cite[Problem 3.1]{Saari3}. We establish a positive answer to this question.

\begin{thm}
\label{main}
For any pure morphic word $w$, if the frequencies of all letters in $w$ exist, then the frequencies of all factors in $w$ exist as well.
\end{thm}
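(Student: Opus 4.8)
The plan is to reduce the statement about factor frequencies to one about letter frequencies and then to transport the hypothesis through this reduction by a spectral analysis of the underlying incidence matrix. Fix an integer $k$ and a factor $v$ of length $k$. The frequency of $v$ in $w$ equals the frequency of the symbol $[v]$ in the \emph{sliding-window word} $w^{(k)}$, defined over the alphabet $A_k$ of length-$k$ factors of $w$ by $w^{(k)}[i]=w[i]w[i+1]\cdots w[i+k-1]$. Hence it suffices to prove that every letter frequency of each $w^{(k)}$ exists. The first step is therefore to exhibit $w^{(k)}$ as a morphic word: using the standard higher-block (desubstitution) construction I would write $w^{(k)}=\tau(z)$, where $z=\psi^{\omega}(\cdot)$ is a \emph{pure} morphic word over an alphabet of pointed length-$k$ factors, $\psi$ is the induced block substitution, and $\tau$ is a coding that forgets the pointing. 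Since a coding transforms letter frequencies by summation over fibres, if all letter frequencies of $z$ exist then all letter frequencies of $w^{(k)}=\tau(z)$ exist; so the problem is reduced to the single pure morphic word $z$.

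The second step is to translate both the hypothesis and the goal into conditions on incidence matrices. Let $M$ be the incidence matrix of $\varphi$ and $M_k$ that of $\psi$. By Saari's characterization of the existence of a letter frequency in a morphic word \cite{Saari2}, the existence of all letter frequencies of $w$, respectively of $z$, is equivalent to an explicit condition concerning the eigenvalues of maximal modulus $\theta$ (the spectral radius) of $M$, respectively of $M_k$, and the sizes of the Jordan blocks attached to them: no eigenvalue of modulus $\theta$ other than $\theta$ itself should contribute a genuine oscillation to the relevant prefix ratios. The hypothesis that all letter frequencies of $w$ exist is thus recast as such a condition on $M$, and the goal becomes the corresponding condition on each $M_k$.

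The heart of the argument, and the step I expect to be the main obstacle, is to prove that the relevant spectral data of $M_k$ is inherited from $M$. The block substitution $\psi$ is a sliding-window cover of $\varphi$, and for such covers I would show that the nonzero spectrum of $M_k$ coincides with that of $M$, with matching Jordan structure at the eigenvalues of maximal modulus $\theta$; the additional symbols introduced by the window construction should contribute only eigenvalue $0$ (or eigenvalues of strictly smaller modulus) and hence leave untouched the top-modulus Jordan blocks that govern frequency existence. Establishing this correspondence rigorously — including the bookkeeping for the coding $\tau$ and the preliminary reductions needed to put $\varphi$ into a convenient form (non-erasing, with its bounded letters separated from its growing letters, so that an occurrence of $v$ meets only boundedly many $\varphi$-images) — is the technical core. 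Once it is in place, the condition on $M$ supplied by the hypothesis transfers verbatim to every $M_k$, so all letter frequencies of $z$ exist, and therefore all factor frequencies of $w$ exist.

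As a guide and consistency check I would keep in mind two reference points. When $\varphi$ is primitive, $\psi$ is primitive as well, and the conclusion is immediate from Michel's theorem \cite{michel,Michel1}; the difficulty is concentrated in the non-primitive case, where letter frequencies may exist only through a delicate cancellation among subdominant eigenvalues. Second, Bell's result that the logarithmic frequency of every factor of a morphic word exists \cite{Bell08}, together with \cite[Proposition 8.4.4]{allouche_shallit_2003}, pins down the candidate values: whenever an ordinary factor frequency exists it must equal the logarithmic one, so the spectral computation can be validated against these logarithmic frequencies. This also clarifies why the pure morphic hypothesis is essential — Saari's counterexample \cite{Saari3} is a coding of a pure morphic word in which the \emph{hidden} letter frequencies oscillate, a degeneracy that cannot arise for $w$ itself.
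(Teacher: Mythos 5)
Your reduction of length-$k$ factor frequencies of $w$ to letter frequencies of the sliding-block word $w^{(k)}$ is correct but, on its own, contentless: the letter frequencies of $w^{(k)}$ \emph{are} the length-$k$ factor frequencies of $w$, and there is no general theorem guaranteeing that a pure morphic word has all of its letter frequencies (Saari proved this only over a binary alphabet). Everything therefore rests on the step you yourself identify as ``the main obstacle'': transferring the criterion of \cite{Saari2} from the incidence matrix $M$ of $\phi$ to the incidence matrix $M_k$ of the block substitution. That step is not carried out, and the spectral claim you offer in its place --- that the nonzero spectrum of $M_k$ coincides with that of $M$, the new symbols contributing only the eigenvalue $0$ --- is false. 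For the Thue--Morse morphism $0\mapsto 01$, $1\mapsto 10$ the incidence matrix has eigenvalues $2$ and $0$, while the induced substitution on the four $2$-blocks $01,11,10,00$ has characteristic polynomial $\lambda(\lambda-1)(\lambda+1)(\lambda-2)$: the window construction creates genuinely new nonzero eigenvalues. What is true is the intertwining relation $\pi M_k=M\pi$ for the first-letter projection $\pi$, so the characteristic polynomial of $M$ divides that of $M_k$; but bounding the \emph{extra} eigenvalues, and controlling the Jordan data at the spectral radius of each strongly connected component of $M_k$ (which is what any frequency criterion for non-primitive substitutions must see), is precisely the hard part, and nothing in your proposal addresses it. Even the hedged version of your claim (new eigenvalues of strictly smaller modulus) is unproved, and would still have to be matched against a precise statement of Saari's criterion before it yields the conclusion.

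For contrast, the paper never invokes a characterization of when letter frequencies exist: it uses the hypothesis only through the concrete fact that $|w[1,n]|_a/n\to\alpha_a$ for each letter $a$. Assuming some factor $v$ has $\limsup-\liminf=\Delta>0$, it writes $w[1,m]$ as $\phi^M(w[1,m''])$ plus a short tail and shows that $|w[1,m]|_v$ equals $\sum_{a\in A_U}|\phi^M(a)|_v\,|w[1,m'']|_a$ up to three error terms (bounded letters, occurrences of $v$ straddling two images, the tail), each of relative size at most $\Delta/6$ once $M$ is chosen large; the main term converges to a constant $C_{v,M}$ by the letter-frequency hypothesis, so $|w[1,m]|_v/m$ is eventually trapped within $\Delta/3$ of $C_{v,M}$, a contradiction. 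If you wish to salvage your route, you would need either to prove the transfer of Saari's condition to block substitutions --- which looks at least as hard as the theorem itself --- or to replace it by a direct counting estimate of this kind, at which point the higher-block machinery is no longer doing any work.
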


\section{Definitions and notation}

Let $A$ be a finite set. It will be called an {\em alphabet} and its elements will be called {\em letters}. Let $A^*$ denote the free monoid generated by $A$ under concatenations having neutral element the empty word $\varepsilon$. The elements in $A^*$ are called the {\em finite words} with letters in $A$. For any finite word $w=w[1]w[2]w[3]\cdots w[n] \in A^*$, the \emph{length} of $w$ is the integer $|w| = n$. Let $A^{\mathbf{N}}$ be the set of infinite concatenations of elements in $A$. The elements in $A^{\mathbf{N}}$ are called the {\em infinite words} with letters in $A$. For any infinite word $w$, the length of $w$, which is also denoted by $|w|$, is infinite. Let $A^{\infty}=A^* \cup A^{\mathbf{N}}$. For any $w\in A^{\infty}$ and $v \in A^{*}$, let $|w|_v$ denote the number of occurrences of $v$ in $w$. 

Let $w=w[1]w[2]w[3]\cdots$ be an element in $A^{\infty}$ and let $v \in A^*$. We say $v$ a {\em prefix} of $w$ if there exists an integer $t$ such that $1 \leq t \leq |w|$ and $v=w[1]w[2]w[3]\cdots w[t]$ and we say $v$ a {\em factor} of $w$ if there exists a pair of integers $t, r$ such that $1 \leq t \leq r \leq |w|$ and $v=w[t]w[t+1]w[t+2]\cdots w[r]$. A prefix $v$ of $w$ is called {\em proper} if $|v| <|w|$. For any pair of integers $t, r$ such that $0 \leq t \leq r \leq |w|$, let $w[t,r]=w[t]w[t+1]w[t+2]\cdots w[r]$.

Let $A$ and $B$ be two alphabets. A \emph{morphism} $\phi$ is a map $A^* \to B^*$ satisfying $\phi(xy)=\phi(x)\phi(y)$ for any pair of elements $x, y$ in $A^*$. The morphism $\phi$ is called \emph{$k$-uniform} for some positive integer $k$ if for all elements $a \in A$, $|\phi(a)|=k$, and it is called \emph{non-uniform} otherwise. A morphism $\phi$ is called a \emph{coding} function if it is $1$-uniform, and it is called \emph{non-erasing} if $\phi(a) \neq \varepsilon$ for all $a \in A$. For any positive integer $k$, by $\phi^k$, we mean the $k$-fold composition of the morphism $\phi$. Let $u \in A^*$ and $v \in B^*$, if $\phi(u)=v$, then $v$ is called the {\em image} of $u$ and $u$ is the {\em pre-image} of $v$ under $\phi$. We write $u=\phi^{-1}(v)$. A morphism $\phi: A^* \to A^*$ is called \emph{primitive} if there exists an integer $n \geq 1$ such that for all $a, b \in A$, $a$ occurs in $\phi^n(b)$.

Let $A$ be an alphabet and let $\phi: A^* \to A^*$ be a morphism. A letter $a \in A$ is called {\em bounded} if $|\phi^k(a)|$ is upper bounded by some constant $C$ for all positive integers $k$ and it is called {\em unbounded} otherwise.

Let $A$ be a finite alphabet, and let $w$ be an infinite word over $A$. $w$ is called \emph{morphic} if there exists an alphabet $B$, a letter $ b \in B$, a nonempty word $v \in B^*$, a non-erasing morphism $\phi: B^{*} \to B^{*}$, and a coding function $\psi: B \to A$, such that $\phi(b)=bv$ and $$w= \lim_{i \to \infty}\psi(\phi^i(b)).$$ Moreover, for any positive integer $k \geq 2$, the word $w$ is called \emph{$k$-automatic} if $\phi$ is $k$-uniform, it is called \emph{automatic} if $\phi$ is $k$-uniform for some integer $k \geq 2$, and it is called \emph{non-automatic} if $\phi$ is not $k$-uniform for any integer $k \geq 2$. The word $w$ is called {\em pure morphic} if $A=B$ and $\psi=Id$ and it is called {\em primitive} if $\phi$ is primitive. Remark that if $w$ is pure morphic, then $w=\phi(w).$

\section{Proof of the main theorem}

Let $A$ be an alphabet, $\phi: A^* \to A^*$ be a non-erasing morphism and $w$ be a pure morphic word such that $w=\phi(w)$. Let $A_B=\{a|\; a \in A,\; \text{$a$ is bounded}\}$ and let $A_U=\{a|\; a \in A,\; \text{$a$ is unbounded}\}$. Obviously, $A_B \cup A_U=A$ and $A_B \cap A_U=\emptyset$. If all letters in $A$ have a frequency, then for any $a \in A$, let $\alpha_a=\lim_{n \to \infty}\frac{|w[1,n]|_a}{n}$ and let $\alpha=\sum_{a \in A_B} \alpha_a$. Let $k_1$ be an integer such that $|\phi^n(a)|<k_1$ for all $a \in A_B$ and for all $n \in \mathbb{N}^+$.

The idea of the proof is that, for any pure morphic word $w$, any factor $v$ of $w$ and any prefix $w[1,m]$ of $w$ with $m$ large enough, supposing that there exists the word $w[1,m'']=\phi^{-k}(w[1,m])$ for some positive integer $k$, then one can prove that, when $k$ is large enough, $|w[1,m]|_v$ can be approximated by the summation of $|\phi^k(w[i])|_v$ for all unbounded letters $w[i]$, $1 \leq i \leq m''$. Thus, the frequency of $v$ can be estimated in terms of $\alpha_a$, $|\phi^k(a)|$ and $|\phi^k(a)|_v$, for all $a \in A_U$.

The strategy of the proof is as follow: supposing that there exists some factor $v$ of $w$ such that $\limsup\frac{|w[1,n]|_v}{n}-\liminf\frac{|w[1,n]|_v}{n}= \Delta>0$, then one can find a real number $C$, independent from $\Delta$, such that $\left|\frac{|w[1,n]|_v}{n}-C\right|<\frac{1}{3}\Delta$ when $n$ is large, which leads to a contradiction.

\begin{proof}[of Theorem \ref{main}]
Let $w$ be a pure morphic word satisfying the hypothesis in Theorem \ref{main} and suppose that there exists a factor $v$ of $w$ of length $L$ such that $\limsup\frac{|w[1,n]|_v}{n}-\liminf\frac{|w[1,n]|_v}{n}= \Delta>0$. Let $M \in \mathbb{N}$ such that for all $a \in A_U$
$$|\phi^M(a)| \geq \max\left\{\frac{6(L+\alpha k_1)}{(1-\alpha)\Delta}, \frac{k_1 \alpha(6-\Delta)}{(1-\alpha)\Delta}\right\}.$$
Let $k_2$ be an integer such that $|\phi^M(a)|<k_2$ for all $a \in A$.

For any $m \in \mathbb{N}^+$, let $m'$ be the largest integer smaller than or equal to $m$ such that $w[1,m']$ is the image of a prefix of $w$ under $\phi^M$. Thus, there exists $m'' \in \mathbb{N}^+$ such that
\begin{equation}\label{eq1}w[1,m']=\phi^M(w[1,m'']). \end{equation}
From the definition of $m'$, if $w[m'+1,m]$ is not empty, then it is a proper prefix of $\phi^M(a)$ for some $a \in A$. Thus,
\begin{equation}\label{eq2}m' \leq m < m'+k_2, \end{equation}
\begin{equation}\label{eq3}|w[1,m']|_v \leq |w[1,m]|_v < |w[1,m']|_v+k_2. \end{equation}
One can estimate $m'$ as well as $|w[1,m']|_v$ in terms of $m''$.
\begin{equation}\label{eq4}m' =\sum_{a \in A_U}|\phi^M(a)|\cdot|w[1,m'']|_a+\epsilon_1, \end{equation}
where 
\begin{equation}\label{eq5} \epsilon_1=\sum_{a \in A_B}|\phi^M(a)|\cdot|w[1,m'']|_a. \end{equation}
Since $|\phi^M(a)| <k_1$ for all $a \in A_B$ and $\sum_{a \in A_B}|w[1,m'']|_a = \alpha m''+o(m''),$ one has
\begin{equation}\label{eq6} 0 \leq \epsilon_1\leq k_1m''\alpha+o(m''). \end{equation}
Similarly, \begin{equation}\label{eq7}|w[1,m']|_v=|\phi^M(w[1,m''])|_v =\sum_{a \in A_U}|\phi^M(a)|_v\cdot|w[1,m'']|_a+\epsilon_2+\epsilon_3, \end{equation}
where 
\begin{equation}\label{eq8} \epsilon_2=\sum_{a \in A_B}|\phi^M(a)|_v\cdot|w[1,m'']|_a. \end{equation}
and $\epsilon_3$ is the number of occurrences of $v$ which begin in a factor $\phi^M(w[t])$ for some $t$ but do not end in the same factor. For any bounded letter $a$, $|\phi^M(a)|_v \leq |\phi^M(a)| <k_1$. Thus,
\begin{equation}\label{eq9} 0 \leq \epsilon_2\leq k_1m''\alpha+o(m''). \end{equation}
For any integer $i$, $1 \leq i \leq m''$,  the number of occurrences of $v$ which begin in $\phi^M(w[i])$ but do not end in the same factor is upper-bounded by the length of $v$, which is $L$. Thus, 
\begin{equation}\label{eq10} 0 \leq \epsilon_3\leq Lm'', \end{equation}
One can estimate $\frac{|w[1,m]|_v}{m}$ in terms of $m''$ using Equations \ref{eq4},\ref{eq7}.
\begin{equation}\label{eq11} \frac{|w[1,m]|_v}{m}=\frac{\sum_{a \in A_U}|\phi^M(a)|_v\cdot|w[1,m'']|_a+\epsilon_2+\epsilon_3+\epsilon_4}{\sum_{a \in A_U}|\phi^M(a)|\cdot|w[1,m'']|_a+\epsilon_1+\epsilon_5}, \end{equation}
where $\epsilon_4=|w[1,m]|_v-|w[1,m']|_v$ and $\epsilon_5=m-m'$. From Equations \ref{eq2}, \ref{eq3}, 
\begin{equation}\label{eq45} 0 \leq \epsilon_4, \epsilon_5 <k_2=o(m'').\end{equation} 
To simply the notation, let $$S_1(m)=\sum_{a \in A_U}|\phi^M(a)|_v\cdot|w[1,m'']|_a, \; S_2(m)=\sum_{a \in A_U}|\phi^M(a)|\cdot|w[1,m'']|_a.$$ 
Since \begin{equation} \frac{S_1(m)}{S_2(m)}= \frac{\sum_{a \in A_U}|\phi^M(a)|_v\cdot\frac{|w[1,m'']|_a}{m''}}{\sum_{a \in A_U}|\phi^M(a)|\cdot\frac{|w[1,m'']|_a}{m''}} \end{equation}
and $\lim_{m \to \infty}\frac{|w[1,m'']|_a}{m''}=\alpha_a$ for all $a \in A_U$, one has
\begin{equation}\label{eqlim} \lim_{m \to \infty}\frac{S_1(m)}{S_2(m)}= \frac{\sum_{a \in A_U}|\phi^M(a)|_v\alpha_a}{\sum_{a \in A_U}|\phi^M(a)|\alpha_a}<1. \end{equation}
Let $C_{v,M}=\frac{\sum_{a \in A_U}|\phi^M(a)|_v\alpha_a}{\sum_{a \in A_U}|\phi^M(a)|\alpha_a}$.\\

To upper bound $\frac{|w[1,m]|_v}{m}$, take $\epsilon_1=\epsilon_5=0$, $\epsilon_2=k_1m''\alpha+o(m'')$, $\epsilon_3=Lm''$ and $\epsilon_4=o(m'')$ using Equations \ref{eq6}, \ref{eq8}, \ref{eq10}, \ref{eq45}. One has
\begin{equation}\label{eq12} \frac{|w[1,m]|_v}{m}=\frac{S_1(m)+\epsilon_2+\epsilon_3+\epsilon_4}{S_2(m)+\epsilon_1+\epsilon_5}\leq \frac{S_1(m)}{S_2(m)}+\frac{(k_1\alpha+L)m''+o(m'')}{S_2(m)}. \end{equation}
Since $|\phi^M(a)| \geq \frac{6(L+\alpha k_1)}{(1-\alpha)\Delta}$ for all $a \in A_U$, 
\begin{equation}\label{eq13}\begin{aligned} \frac{(k_1\alpha+L)m''+o(m'')}{S_2(m)}&=\frac{(k_1\alpha+L)m''+o(m'')}{\sum_{a \in A_U}|\phi^M(a)|\cdot|w[1,m'']|_a}\\
&\leq \frac{(k_1\alpha+L)m''+o(m'')}{\frac{6(L+\alpha k_1)}{(1-\alpha)\Delta}((1-\alpha) m''+o(m''))}\\
&\leq \frac{\Delta}{6}+o(1).
\end{aligned} 
\end{equation}
Thus, combining Equations \ref{eq12}, \ref{eq13},
\begin{equation}\label{eq14} \frac{|w[1,m]|_v}{m}\leq \frac{S_1(m)}{S_2(m)}+\frac{\Delta}{6}+o(1). \end{equation}
From Equation \ref{eqlim}, There exists an integer $N_1$ such that for all $m \geq N_1$, \begin{equation}\label{eq15} \frac{|w[1,m]|_v}{m}< C_{v,M}+\frac{\Delta}{6}+\frac{\Delta}{6}= C_{v,M}+\frac{\Delta}{3}.\end{equation}

To lower bound $\frac{|w[1,m]|_v}{m}$, take $\epsilon_1=k_1m''\alpha+o(m'')$, $\epsilon_5=o(m'')$ and $\epsilon_2=\epsilon_3=\epsilon_4=0$ using Equations \ref{eq6}, \ref{eq8}, \ref{eq10}, \ref{eq45}. One has
\begin{equation}\label{eq16} \frac{|w[1,m]|_v}{m}=\frac{S_1(m)+\epsilon_2+\epsilon_3+ \epsilon_4}{S_2(m)+\epsilon_1+\epsilon_5}\geq \frac{S_1(m)}{S_2(m)+k_1m''\alpha+o(m'')} \end{equation}
Since $|\phi^M(a)| \geq \frac{k_1 \alpha(6-\Delta)}{(1-\alpha)\Delta}$ for all $a \in A_U$, 
\begin{equation}\label{eq17}\begin{aligned}  \frac{|w[1,m]|_v}{m}&\geq \frac{S_1(m)}{S_2(m)+k_1m''\alpha+o(m'')}\\
&\geq \frac{S_1(m)}{S_2(m)}\frac{1}{1+\frac{k_1m''\alpha+o(m'')}{S_2(m)}}\\
&\geq \frac{S_1(m)}{S_2(m)}\frac{1}{1+\frac{k_1m''\alpha+o(m'')}{\frac{k_1 \alpha(6-\Delta)}{(1-\alpha)\Delta}((1-\alpha)m''+o(m''))}}\\
&\geq \frac{S_1(m)}{S_2(m)}(1- \frac{\Delta}{6})+o(1)\\
&\geq \frac{S_1(m)}{S_2(m)}- \frac{\Delta}{6}+o(1).
\end{aligned} 
\end{equation}
The last inequality is from the fact that $\frac{S_1(m)}{S_2(m)}<1$ for all $m$. From Equation \ref{eqlim}, there exists an integer $N_2$ such that for all $m \geq N_2$, \begin{equation} \label{equp} \frac{|w[1,m]|_v}{m}> C_{v,M}-\frac{\Delta}{6}-\frac{\Delta}{6}= C_{v,M}-\frac{\Delta}{3}.\end{equation}

In conclusion, let $N=\max\{N_1, N_2\}$, then for any $m \geq N$, $ \left|\frac{|w[1,m]|_v}{m}-C_{v,M}\right|< \frac{\Delta}{3}$ which is a contradiction to the hypothesis that $\limsup\frac{|w[1,n]|_v}{n}-\liminf\frac{|w[1,n]|_v}{n}= \Delta>0$. Thus, all factors in $w$ have a frequency.   
\end{proof}

\begin{cor}
Let $w$ be a pure morphic word over the alphabet $A$ such that each element in $A$ has a frequency in $w$. If $\phi$ is the morphism such that $\phi(w)=w$, then for any $v \in A^*$, with the same notation as above,
\begin{equation}\lim_{m \to \infty}\frac{|w[1,m]|_v}{m}=\lim_{M \to \infty}C_{v,M}.\end{equation}
\end{cor}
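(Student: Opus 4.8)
The plan is to reuse, in reverse, the quantitative estimates already assembled in the proof of Theorem \ref{main}. By Theorem \ref{main} the frequency
\[
f_v := \lim_{m \to \infty}\frac{|w[1,m]|_v}{m}
\]
now exists, so the entire content of the corollary is to identify this number with $\lim_{M \to \infty} C_{v,M}$. In the proof of the theorem the quantity $\Delta$ entered only as a positive parameter used to calibrate how large $M$ had to be; I would now free it from its meaning as an oscillation and treat it as an arbitrary small $\eta>0$.

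First I would fix an arbitrary $\eta>0$. Because $\phi$ is non-erasing we have $|\phi^{M+1}(a)|=|\phi(\phi^M(a))|\ge|\phi^M(a)|$, so $M\mapsto|\phi^M(a)|$ is non-decreasing and, for every $a\in A_U$, unbounded; hence it tends to infinity. Consequently there is an integer $M_0$ such that for every $M\ge M_0$ and every $a\in A_U$,
\[
|\phi^M(a)| \geq \max\left\{\frac{6(L+\alpha k_1)}{(1-\alpha)\eta}, \ \frac{k_1 \alpha(6-\eta)}{(1-\alpha)\eta}\right\}.
\]
For each such $M$ the chain of inequalities (\ref{eq11})–(\ref{equp}), read verbatim with $\eta$ in place of $\Delta$, gives for all sufficiently large $m$ the two-sided bound
\[
\left|\frac{|w[1,m]|_v}{m} - C_{v,M}\right| < \frac{\eta}{3}.
\]

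Next I would send $m\to\infty$ with $M$ fixed. Since $f_v$ exists, the left-hand side converges to $|f_v - C_{v,M}|$, and therefore $|f_v - C_{v,M}|\le \eta/3$ for every $M\ge M_0$. As $\eta>0$ was arbitrary, this is exactly the statement that $C_{v,M}\to f_v$ as $M\to\infty$, which is the claimed identity.

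The one point requiring care is the order of the two limits: for each fixed $M$ one must first let $m\to\infty$, which is where the convergence $\frac{|w[1,m'']|_a}{m''}\to\alpha_a$ and the bounds on the error terms $\epsilon_1,\dots,\epsilon_5$ from (\ref{eq6}), (\ref{eq9}), (\ref{eq10}), (\ref{eq45}) are consumed, and only afterwards let $M\to\infty$. The genuinely new ingredient, absent from the theorem's proof, is the monotonicity of $M\mapsto|\phi^M(a)|$: it is what guarantees that once the threshold condition holds at $M_0$ it persists for all larger $M$, thereby upgrading a single admissible value of $M$ into a bound valid for the whole tail and hence into a limit statement. I expect this persistence argument, rather than any fresh estimate, to be the main thing to verify.
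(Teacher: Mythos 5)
Your proposal is correct and follows essentially the same route as the paper: both reuse the estimates (\ref{eq15}) and (\ref{equp}) with $\Delta$ reinterpreted as an arbitrary small parameter, then conclude $|f_v - C_{v,M}|$ is small for all large $M$ (the paper does this via the triangle inequality against $\alpha_v$, you by letting $m\to\infty$ directly, which is the same step). Your explicit remark that $M\mapsto|\phi^M(a)|$ is non-decreasing and tends to infinity for $a\in A_U$ is a detail the paper leaves implicit, but it is needed and correct.
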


\begin{proof}
Once it is proved that each factor $v$ of $w$ has a frequency, the argument in the proof of  Theorem \ref{main} shows that, for any $\Delta >0$, there exists $M \in \mathbb{N}^+$ such that for all $m \geq M$, there exists $N_m \in \mathbb{N}^+$ such that for all $n \geq N_m$, 
\begin{equation}\label{eq23}\left|C_{v,m}-\frac{|w[1,n]|_v}{n}\right|< \frac{\Delta}{3}.\end{equation}See Equations \ref{equp}, \ref{eq15}.  On the other hand, letting $\alpha_v$ be the frequency of $v$ in $w$, there exists $N'_m \in \mathbb{N}^+$ such that for all $n \geq N'_m$, 
\begin{equation}\left|\alpha_v-\frac{|w[1,n]|_v}{n}\right|< \frac{\Delta}{3}.\end{equation}
Thus, for $n$ large, \begin{equation}\label{lim}\left|\alpha_v-C_{v,m}\right|< \frac{2\Delta}{3}.\end{equation}
However, there is no term in Equation \ref{lim} involving the variable $n$, thus, Equation \ref{lim} holds independently from the choice of $n$. Consequently, 
\begin{equation}
\lim_{M \to \infty}C_{v,M}=\alpha_v.
\end{equation}
\end{proof}

\begin{rmk}
In the proof of Theorem \ref{main}, the existence of the frequency of the factor $v$ is not established in a direct way. The reason is that once Equation \ref{eq23} is established, it is not trivial to prove that the sequence $(C_{v,m})_{m \in \mathbb{N}^+}$ converges. However, when it is proved that the frequency of $v$ exists, then the limit of the above sequence exists as well.
\end{rmk}

\section{Conclusion}
The author concludes this note by asking the following question:
\begin{qst}
For any pure morphic word $w$ and any factor $v$ of $w$, is it possible to prove the existence of the frequency of $v$ and directly compute the value using the incidence matrix of $w$?
\end{qst}

%%%%%%%%%%%%%%%%%%%%%%%%%%%%%%%%%%%%%%%%%%%%%%%%%%%%
% Biblio
%%%%%%%%%%%%%%%%%%%%%%%%%%%%%%%%%%%%%%%%%%%%%%%%%%%%

\bibliographystyle{splncs03}
\bibliography{biblio}

\begin{thebibliography}{1}
\providecommand{\url}[1]{\texttt{#1}}
\providecommand{\urlprefix}{URL }

\bibitem{allouche_shallit_2003}
Allouche, J.-P., Shallit, J.: Automatic Sequences: Theory, Applications,
  Generalizations. Cambridge University Press (2003)

\bibitem{Bell08}
Bell, J.P.: Logarithmic frequency in morphic sequences. {Journal de Th\'eorie
  des Nombres de Bordeaux}  20(2),  227--241 (2008)

\bibitem{Cobham}
Cobham, A.: {Uniform tag sequences}. { Mathematical Systems Theory}  6,
  164--192 (1972)

\bibitem{michel}
Michel, P.: {Sur les ensembles minimaux engendr{\'e}s par les substitutions de
  longueur non constante}. {Ph.D. Thesis}, {Universit{\'e} de Renne} (1975)

\bibitem{Michel1}
Michel, P.: Stricte ergodicit{\'e} d'ensembles minimaux de substitution. In:
  Conze, J.P., Keane, M.S. (eds.) Th{\'e}orie Ergodique: : Actes des
  Journ{\'e}es Ergodiques, Rennes 1973/1974. pp. 189--201. Springer Berlin
  Heidelberg, Berlin, Heidelberg (1976)

\bibitem{PETER03}
Peter, M.: The asymptotic distribution of elements in automatic sequences.
  Theoretical Computer Science  301(1),  285--312 (2003)

\bibitem{Saari1}
Saari, K.: On the frequency of letters in pure binary morphic sequences. In:
  De~Felice, C., Restivo, A. (eds.) Developments in Language Theory. pp.
  397--408. Springer Berlin Heidelberg, Berlin, Heidelberg (2005)

\bibitem{Saari2}
Saari, K.: On the frequency of letters in morphic sequences. In: Grigoriev, D.,
  Harrison, J., Hirsch, E.A. (eds.) Computer Science -- Theory and
  Applications. pp. 334--345. Springer Berlin Heidelberg, Berlin, Heidelberg
  (2006)

\bibitem{Saari3}
Saari, K.: On the frequency and periodicity of infinite words. {Ph.D. Thesis},
  {University of Turku} (2008)

\end{thebibliography}

\end{document}